\documentclass{amsart}

\usepackage{amssymb,amsfonts, latexsym,amsmath,hhline,array,longtable}
\usepackage{pdfsync,color, comment,colortbl, mathrsfs,stmaryrd,cite,graphicx,yfonts}

\usepackage{ifpdf}
\ifpdf \usepackage[colorlinks=true, citecolor=blue, linkcolor=blue, urlcolor=blue]{hyperref} \fi

\newcommand{\cal}{\mathcal}

\newtheorem{formula}{}[section]
\newtheorem{definition}[formula]{Definition}
\newtheorem{corollary}[formula]{Corollary}
\newtheorem{remark}[formula]{Remark}
\newtheorem{lemma}[formula]{Lemma}
\newtheorem{theorem}[formula]{Theorem}
\newtheorem{proposition}[formula]{Proposition}
\newtheorem*{claim}{Claim}

\def\thrm{\begin{theorem}}
\def\thrml#1{\begin{theorem}\label{#1}}
\def\ethrm{\end{theorem}}
\def\prpstn{\begin{proposition}}
\def\prpstnl#1{\begin{proposition}\label{#1}}
\def\eprpstn{\end{proposition}}
\def\rmrk{\begin{remark}}
\def\rmrkl#1{\begin{remark}\label{#1}}
\def\ermrk{\end{remark}}
\def\dfntn{\begin{definition}}
\def\dfntnl#1{\begin{definition}\label{#1}}
\def\edfntn{\end{definition}}
\def\nmrt{\begin{enumerate}}
\def\enmrt{\end{enumerate}}
\def\tm#1{\item[{\rm (#1)}]}
\def\qtnl#1{\begin{equation}\label{#1}}
\def\eqtn{\end{equation}}
\def\lmm{\begin{lemma}}
\def\lmml#1{\begin{lemma}\label{#1}}
\def\elmm{\end{lemma}}
\def\crllr{\begin{corollary}}
\def\crllrl#1{\begin{corollary}\label{#1}}
\def\ecrllr{\end{corollary}}
\def\css{\begin{cases}}
\def\ecss{\end{cases}}
\def\prf{\begin{proof}}
\def\eprf{\end{proof}}
\def\clm{\begin{claim}}
\def\eclm{\end{claim}}

\def\cA{{\cal A}}

\def\cX{{\cal X}}

\def\mF{{\mathbb F}}

\def\mZ{{\mathbb Z}}

\def\fX{{\mathfrak X}}
\def\fY{{\mathfrak Y}}
\def\fZ{{\mathfrak Z}}

\def\sX{{\mathscr{X}}}

\DeclareMathOperator{\aut}{Aut}
\DeclareMathOperator{\alt}{Alt}
\DeclareMathOperator{\AGL}{AGL}

\DeclareMathOperator{\GL}{GL}

\DeclareMathOperator{\mon}{Mon}

\DeclareMathOperator{\orb}{Orb}

\DeclareMathOperator{\pr}{pr}
\DeclareMathOperator{\PSL}{PSL}
\DeclareMathOperator{\PGL}{PGL}
\DeclareMathOperator{\PGaL}{P\Gamma L}

\DeclareMathOperator{\rad}{rad}

\DeclareMathOperator{\Span}{Span}

\DeclareMathOperator{\sym}{Sym}
\DeclareMathOperator{\WL}{WL}

\def\mmod#1#2#3{#1=#2\ (\text{\rm mod}\hspace{2pt}#3)}

\def\qaq{\quad\text{and}\quad}

\def\und#1{{\underline{#1}}}

\begin{document}

\title{On circulant ternary coherent configurations of prime degree}

\author{Gang Chen}
\address{School of Mathematics and Statistics, Hainan University, Haikou 570228, China}
\email{chengangmath@mail.ccnu.edu.cn}
\author{Qing Ren}
\address{Guangxi Technological College of Machinery and Electricity, Nanning, China} \email{renqing@gxcme.edu.cn}
\author{Ilia Ponomarenko}
\address{School of Mathematical Sciences, Hebei Key Laboratory of Computational Mathematics
	and Applications, Hebei Normal University, Shijiazhuang 050024, P. R. China}
\address{Steklov Institute of Mathematics at St. Petersburg, Russia}
\email{inp@pdmi.ras.ru}
\thanks{The first author was supported by NSFC (project No.12371019). The third author was supported by the grant of The Natural Science Foundation of Hebei Province (project No. A2023205045).}
\date{}

\begin{abstract}
Ternary coherent configurations are, on the one hand, a special case of multidimensional coherent configurations introduced by L.~Babai (2016), and, on the other hand, a natural generalization of association schemes on triples introduced by D.~M.~Mesner and P.~Bhattacharya (1990).  A ternary coherent configuration $\fX$ is said to be circulant if the automorphism group~$\aut(\fX)$ of~$\fX$  has a regular cyclic  subgroup, and schurian if the classes of $\fX$  are the orbits of  the componentwise action of the group $\aut(\fX)$ on triples of points of~$\fX$. It is proved that any circulant ternary coherent configuration $\fX$ of prime degree~$p$ is schurian with the possible exception of the case when $\fX$ is an association schemes on triples and either $\aut(\fX)=\AGL_1(p)$ and $p=\pm 1\pmod{8}$, or  $\aut(\fX)\lvertneqq\AGL_1(p)$. 
\end{abstract}

\maketitle

\section{Introduction}

It is a common principle in algebraic combinatorics that objects with rich automorphism group are easier to analyze. In the most symmetric case, such objects can be described entirely in terms of relations of varying arity, on which the automorphism group acts transitively. However, this paradigm does not always apply: sometimes the object under study has small automorphism group. A classical example is given by the so-called Schur rings introduced by I.~Schur in 1933. As  H.~Wielandt later observed in~\cite{Wie1969}, Schur believed that all Schur rings should arise naturally from their automorphism groups. Wielandt’s counterexamples, presented in his monograph on permutation groups~\cite[Theorem~26.4]{Wie1969a}, disproved this expectation and led to the formulation of the Schurity problem for Schur rings: to determine which Schur rings arise from their automorphism groups in the sense envisioned by Schur.

In this paper, we address the Schurity problem for multidimensional \emph{coherent configurations}. These were introduced by L.~Babai~\cite{Babai2015c} (see also~\cite{AndresHelfgott2017}) in connection with the graph isomorphism problem, as a natural extension of the binary coherent configurations originally defined by D.~Higman in his study of permutation groups~\cite{Hig1970a}. The motivation for considering multidimensional coherent configurations is twofold. On the one hand, they arise naturally in the context of the graph isomorphism problem and the Weisfeiler--Leman method (see, e.g., \cite{CaiFI1992}). On the other hand, unlike in the binary case -- where the automorphism groups are precisely the $2$-closed groups in Wielandt’s terminology~\cite{Wie1969} -- the automorphism groups of multidimensional coherent configurations can be arbitrary permutation groups. 

By definition (see Section~\ref{180925f}), an \emph{$m$-ary coherent configuration} $\fX$ on a finite set~$\Omega$ is precisely the partition of the Cartesian power $\Omega^m$, constructed by the $m$-dimensional Weisfeiler--Leman (WL) algorithm; the number $|\Omega|$ is called the \emph{order} of~$\fX$. We say that $\fX$  is  \emph{schurian} if $\fX=\orb_m(G)$ is the set of all orbits of a group $G\le \sym(\Omega)$ with respect to the componentwise action on~$\Omega^m$. Nonschurian $m$-dimensional coherent configurations can serve as examples of graphs or higher-arity relational structures that are hard instances for isomorphism  testing using  the WL algorithm. At present, however, the only known hard examples are the graphs constructed by Cai, F\"urer, and Immerman~\cite{CaiFI1992} and their modifications.

An interesting special case of ternary coherent configurations -- \emph{association sche\-mes on triples }-- was introduced and studied in~\cite{MesnB1990}; in contrast to general case, the projection of an association scheme on triples to any two coordinates is a partition with exactly two classes: the diagonal of Cartesian square and its complement. The schurity of such ternary coherent configuration implies that its automorphism group is $2$-transitive. Schurian ternary coherent configurations corresponding to some $2$-transitive groups were further investigated in~\cite{BalmacedaB2025}. 

The nonschurian case appears considerably more intricate; here, of special interest  are  those nonschurian ternary coherent configurations whose projection onto the first two coordinates is a schurian binary coherent configuration. In this setting, known examples are largely limited to association schemes on triples described in~\cite{MesnB1990} and based on specific designs. The constructions presented  in~\cite{WuRP2025} suggest that the examples we are interested in can also be found among \emph{circulant} ternary coherent configurations, that is, those that admit a regular cyclic automorphism group. The case of circulant association schemes on triples was studied in~\cite{BhattacharyaP2021}. It turned out that even in this rather special class, it is difficult to describe those circulant association schemes on triples whose ``essential'' intersection numbers are~$1$, the so-called thin circulants in terminology of~\cite{BhattacharyaP2021}.

In the present paper, we restrict ourselves to circulant ternary coherent configurations $\fX$ of prime order. Our first main result shows that the most difficult case here arises if $\fX$ is an association scheme on triples. 

\thrml{060326a}
Any circulant ternary coherent configuration of prime order, other than  an association scheme on triples,  is schurian.
\ethrm

The proof of Theorem~\ref{060326a}, presented in Subsection~\ref{030426a}, is more or less direct. Indeed, let $\fX$ be a circulant ternary coherent configuration of prime order. The assumption that $\fX$ is not an association scheme on triples implies that the projection of $\fX$ to any two coordinates is a nontrivial cyclotomic association scheme of prime degree. In accordance with~\cite[Theorem~1.2(2)]{EvdP2003}, the latter scheme has a combinatorial base at most~$2$. This enables us to determine the classes of~$\fX$ explicitly. It turns out that each of them is an orbit of the group $\aut(\fX)$ acting on the triples, which means that $\fX$ is schurian.

\thrml{310825a}
Let $\fX$ be a (circulant) association scheme on triples, that  has prime order~$p$ and admits  a $2$-transitive automorphism group. Then $\fX$ is schurian unless $\aut(\fX)=\AGL_1(p)$ and $\mmod{p}{\pm 1}{8}$.
\ethrm

We believe that Theorem~\ref{310825a} is true without any assumption on the prime~$p$. At least, we have no counterexamples and, moreover, our proof can easily be extended to cover the case where $p-1=2^ar^b$, where $a\ge 1$ and $b\ge 0$ are integers.

We briefly outline the proof of Theorem~\ref{310825a}, which is presented in Subsection~\ref{030426b}. By the Burnside theorem on transitive groups of  prime degree, the group $\aut(\fX)$ 
is either $\AGL_1(p)$ or non-solvable (see also the explicit classification in~\cite{Jones2002}). In the non-solvable case, the stabilizer of two distinct points has at most two nonsingleton orbits, which rules out the existence of a nonschurian ternary coherent configuration with $2$-transitive automorphism group. 

The most difficult  case $\aut(\fX)=\AGL_1(p)$ is considered in Section~\ref{180925v} (Theorem ~\ref{310825g}). The proof there is based on the fact that a certain residue of the ternary coherent configuration~$\fX$ determines a Schur ring over a cyclic group. Such rings admit a more or less detailed classification~\cite{LeuM1996,EvdP2003}, explained also in Section ~\ref{180925v}. Using this classification and the assumption that~$2$ is a quadratic non-residue modulo~$p$, we can first prove that the Schur ring in question is either trivial or a group ring, and then deduce from this that the ternary coherent configuration~$\fX$ is schurian. The application of the classification exploits several number-theoretic properties of finite fields of prime order; we have collected them in Section ~\ref{100925a}.

As an immediate corollary of Theorems~\ref{060326a} and~\ref{310825a}, we obtain the following statement which shows where nonschurian circulant ternary coherent configurations of prime order can hide.

\crllrl{060326y}
Any circulant ternary coherent configuration $\fX$ of prime order is schurian unless $\fX$ is an  association scheme on triples such that  either $\aut(\fX)=\AGL_1(p)$ with $\mmod{p}{\pm 1}{8}$, or $\aut(\fX)\lvertneqq \AGL_1(p)$.
\ecrllr

All facts and concepts concerning multidimensional coherent configurations that are needed to understand the paper are presented in Section~\ref{180925f}.

\section{Multidimensional coherent configurations}\label{180925f}

\subsection{Notation}\label{230226a}
Throughout the paper, $\Omega$ denotes a finite set and $m\ge1$ an integer. Set $M=\{1,\dots,m\}$. For a tuple $x=(x_1,\dots,x_m)\in\Omega^m$, we write $x_i$ for its $i$th coordinate, $i\in M$. Denote by  $\rho(x)$ the equivalence relation on $M$ given by
$$
i\sim j \iff x_i=x_j.
$$
The number of classes of this equivalence relation is denoted by $\|\rho(x)\|$.

Denote by $\mon(M)$ the monoid of all maps from $M$ to itself (they act on the set of coordinates); for $\sigma\in\mon(M)$, we put
$$
x^\sigma=(x_{1^\sigma},\dots,x_{m^\sigma}).
$$
For $x\in\Omega^m$, $i\in M$, and $\alpha\in\Omega$, denote by $x_{i\leftarrow\alpha}$ the tuple obtained from $x$ by replacing the $i$th coordinate with $\alpha$. 

Let $I\subseteq M$ be a nonempty set of coordinate indices and write $\pr_I:\Omega^m\to\Omega^I$ for the projection onto the coordinates in~$I$. When $I=\{1,\ldots,k\}$, we abbreviate $\pr_I=\pr_k$. For $X\subseteq \Omega^m$ and $u\in \Omega^I$, we  set
$$
X_u=\{\pr_{M\setminus I}(x):\ x\in X\text{ and }\pr_I(x)=u\}.
$$
Clearly, if $J\subsetneq M$ is a nonempty subset disjoint with~$I$, then $(X_u)_v=X_{uv}$ for all $u\in \Omega^I$ and $v\in \Omega^J$, where $uv\in \Omega^{I\cup J}$ is the unique tuple for which $\pr_I(uv)=u$ and $\pr_J(uv)=v$.

We use standard notation: $\sym(\Omega)$ for the symmetric group on $\Omega$, $C_p$ for the cyclic group of order $p$, and $\AGL_1(p)=\AGL_1(\mF)$ for the $1$-dimensional linear affine group over a field $\mF=\mF_p$ with $p$ elements. 

\subsection{Definition}
A partition $\fX$ of  $\Omega^m$ is called an \emph{$m$-ary coherent configuration} on~$\Omega$  if the following conditions are satisfied for all $X\in \fX$:
\nmrt
\tm{C1} $\rho(x)=: \rho(X)$ does not depend on the choice of $x\in X$,
\tm{C2} $X^\sigma\in \fX$  for all $\sigma\in\mon(M)$,
\tm{C3} for any $X_1,\ldots,X_m\in \fX$, the number 	
$$
n_{X_1,\dots,X_m}^X=|\{\alpha\in\Omega:\;x_{i\leftarrow\alpha}\in X_i\ \text{for all } i\in M\}|
$$
does not depend on the choice of $x\in X$.
\enmrt
For $m=2$, this definition reproduces the classical notion of a coherent configuration~\cite{CP2019}.  Let $m=3$ and for any equivalence relation $\rho\subseteq M\times M$ with at most two classes, there is exactly one  $X\in\fX$ for which $\rho(X)=\rho$. Then the $3$-ary coherent configuration $\fX$ is called an \emph{association scheme on triples}, see~\cite{MesnB1990}.

\subsection{Automorphisms and schurity}
Let $\fX$ be an $m$-ary coherent configuration on~$\Omega$. 
The \emph{automorphism group} $\aut(\fX)$ is the subgroup of $\sym(\Omega)$ consisting of all permutations~$f$ such that $X^f=X$ for every $X\in\fX$, where the action is defined 
componentwise on~$\Omega^m$:
$$
x^f=(x_1,\dots,x_m)^f = (x_1^f,\dots,x_m^f)
$$
for all $x\in\Omega^m$. Thus the group $\aut(\fX)$ acts on $\Omega$ and leaves each class of the partition $\fX$ fixed. It is not difficult to verify that $(x^\sigma)^f=(x^f)^\sigma$ for all~$x\in\Omega^m$, $\sigma\in\mon(M)$, and $f\in\sym(\Omega)$; in particular, $(X^\sigma)^f=(X^f)^\sigma=X^\sigma$ for all $X\in \fX$ and $f\in\aut(\fX)$. 


Let $G\le\sym(\Omega)$ be a permutation group. If $G\le\aut(\fX)$, then we say that $\fX$ is \emph{invariant} with respect to $G$.  Any set of the form $x^G=\{x^f:\ f\in G\}$  with $x\in\Omega^m$ is called an \emph{$m$-orbit} of $G$.  The partition 
$$
\orb_m(G)=\{x^G:\ x\in\Omega^m\}
$$ 
of the Cartesian power $\Omega^m$ into the $m$-orbits of~$G$  is an $m$-ary coherent configuration.  A coherent configuration $\fX$ is called \emph{schurian} if $\fX=\orb_m(G)$ for some $G\le\sym(\Omega)$. Note that in general, the group~$G$ with this property is not a unique one.

\subsection{Galois correspondence}
The $m$-ary coherent configurations on $\Omega$ are partially ordered by refinement: we write
$\fX\le\fX'$ if every class of $\fX$ is a union of classes of~$\fX'$.  It is straightforward to verify the monotonicity relations
\qtnl{110326y}
\fX\le\fX'\ \Longrightarrow\ \aut(\fX)\ge\aut(\fX'),\qquad
G\le G'\ \Longrightarrow\ \orb_m(G)\ge\orb_m(G'),
\eqtn
for partitions $\fX,\fX'$ of~$\Omega^m$ and subgroups $G,G'\le\sym(\Omega)$. Moreover, the two maps
$$
\fX\longmapsto \aut(\fX),\qquad G\longmapsto \orb_m(G)
$$
form a Galois connection between the lattice of $m$-ary coherent configurations (ordered by refinement) and the lattice of permutation subgroups of~$\sym(\Omega)$ (ordered by inclusion). In particular, for every $m$-ary coherent configuration $\fX$ and every group $G$, we have the following identities:
$$
\aut(\orb_m(\aut(\fX)))=\aut(\fX),\qquad
\orb_m(\aut(\orb_m(G)))=\orb_m(G).
$$
The Galois closed objects under this correspondence are the schurian $m$-ary coherent configurations and $m$-closed permutation groups, see~\cite{Wie1969}.

\subsection{Projections and residues}\label{170925r}

Let $I\subseteq M$ be a nonempty subset and $y\in\Omega^{M\setminus I}$. In accordance with~\cite{Babai2015c,AndresHelfgott2017}, the sets 
$$
\pr_I\fX=\{\pr_I X:\ X\in\fX\},\qquad
\fX_y=\{X_y:\ X\in\fX,\ X_y\neq\varnothing\}
$$
are $k$-ary coherent configurations on~$\Omega$ , where $k=|I|$; they are called the \emph{projection} of $\fX$ to~$I$, and the \emph{residue} of $\fX$ with respect to~$y$, respectively. Clearly,
$$
\fX_y\ge \pr_I\fX. 
$$
The lemma below immediately follows from the definitions.

\lmml{070326a}
A ternary coherent configuration $\fX$ is an association scheme on triples if and only if $\pr_2\fX$ is a trivial coherent configuration.\footnote{A binary coherent configuration is said to be trivial if it consists of  at most~$2$ classes.}
\elmm

Taking projections and residues respects the automorphism groups in the following sense. First, given $I\subseteq M$ and $y\in\Omega^{M\setminus I}$,  one has the inclusions
\qtnl{170925b}
\aut(\pr_I\fX)\ge \aut (\fX),\qquad \aut(\fX_y)\ge \aut(\fX)_y.
\eqtn
Second,  if  $\fX=\orb_m(G)$ for some $G\le\sym(\Omega)$, then 
$$
\pr_k\fX=\orb_k(G),\qquad\fX_y=\orb_k(G_y),
$$ 
where  $k=|I|$ and $G_y$ is the pointwise stabilizer of the entries of~$y$ in the group~$G$.

The projections $\pr_1 X$, $X\in\fX$, are called \emph{fibers} of the $m$-ary coherent configuration~$\fX$. It is not hard to verify that if $\Delta$ is a fiber of~$\fX$, then the partition of~$\Delta^m$ into the nonempty classes $X\cap\Delta^m$, $X\in\fX$, is an $m$-ary coherent configuration on~$\Delta$; we denote it by $\fX^\Delta$  and call the \emph{restriction} of~$\fX$ to~$\Delta$.  When $m=2$, these definitions are compatible with that for binary coherent configurations. 

\section{Schur partitions}\label{180925v}
Let $G$ be a group. Following~\cite[Definition~4.1]{Muzychuk2004}, a partition $\Pi$ of~$G$ is called a \emph{Schur partition} if $\{1\}\in \Pi$, the set $X^{-1}=\{x^{-1}:\ x\in X\}$ is a class of~$\Pi$ for all $X\in\Pi$, and the $\mZ$-module
\qtnl{120925t}
\cA=\Span_\mZ\{\und{X}:\ X\in \Pi\}
\eqtn
is a subring of the group ring $\mZ G$, where $\und{X}\in \mZ G$ is the formal sum of the elements of $X$. The subrings of the form~\eqref{120925t} corresponding to the Schur partitions are exactly the Schur rings over the group~$G$. The statement below is an immediate consequence of~\cite[Corollary~2.4.18]{CP2019}.

\prpstnl{190925s}
Let $\fX$ be a binary coherent configuration on the elements of a group~$G$. Assume that $\fX$ is invariant  with respect to the group induced by right multiplications of~$G$. Then the residue $\fX_{(1)}$ with respect to the tuple $(1)$ is a Schur partition of~$G$.
\eprpstn

Two obvious examples of the Schur partition $\Pi$ are the \emph{discrete} and \emph{trivial} partitions of~$G$: in the first case, $|\Pi|=|G|$ and the classes of $\Pi$ are singletons, whereas in the second case, $|\Pi|\le 2$ and the classes of $\Pi$ are $\{1\}$ and $G\setminus\{1\}$ (if $G$ is not the identity group).  For any subgroup $K\le\aut(G)$, the partition $\orb(K,G)$ of $G$ into the orbits of ~$K$ is a Schur partition; it is called \emph{orbit} (or \emph{cyclotomic}) partition.

The Schur rings over cyclic group were completely classified in~\cite{LeuM1996}. In Section~\ref{120925h}, we will need  a refinement of this  classification, given in~\cite{EvdP2003}. We formulate it  in terms of Schur partitions, not  in terms of Schur rings as was done there. Below a subset~$X$ of a cyclic group $G$ is said to be \emph{highest} if $X$ contains a generator of~$G$;  the subgroup of all~$y\in G$ such that $yX=Xy=X$ is called the
\emph{radical}  of $X$ and is denoted by~$\rad(X)$. 

\begin{lemma}\label{030825c}
Let $\Pi$ be a Schur partition of a cyclic group~$G$.  Then exactly one of the following alternatives holds.
\nmrt
\tm{a} $\rad(X)\ne \{1\}$ for each highest class $X\in\Pi$.
\item[(b)] There exist a direct product decomposition $G=G_1\times\cdots\times G_k$ $(k\ge1)$ and a Schur partition $\Pi_i$ of the group $G_i$, $1\le i\le k$, such that
\nmrt
\tm{b1} $\Pi_1,\ldots,\Pi_k$ are trivial partitions except for at most one being  orbit;
\tm{b2} $\Pi=\{X_1\times\cdots\times X_k:\ X_1\in\Pi_1,\ldots,X_k\in\Pi_k\}$.
\enmrt
\enmrt
\end{lemma}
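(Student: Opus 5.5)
This lemma is a reformulation of the structure theory of Schur rings over cyclic groups from~\cite{LeuM1996,EvdP2003}, so the plan is to translate the Schur-ring statement into the language of Schur partitions rather than reprove the classification. Let $\cA$ be the Schur ring over the cyclic group $G$ attached to $\Pi$ by~\eqref{120925t}, so that the basic sets of $\cA$ are exactly the classes of $\Pi$. The radical $\rad(X)$ of a basic set and the notion of a highest basic set are the same on both sides of this correspondence. We also note that $\cA$ is a tensor product of Schur rings $\cA_i$ over $G_i$ exactly when condition (b2) holds for the corresponding partitions $\Pi_i$. Once these identifications are in place, the lemma becomes a statement about $\cA$.

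First, the two alternatives are mutually exclusive. Suppose (b) holds. Every generator $g=(g_1,\dots,g_k)$ of $G$ has each $g_i$ generating $G_i$. Let $X=X_1\times\cdots\times X_k$ be the class containing $g$. For $i$ with $\Pi_i$ trivial (and $G_i\ne 1$), $X_i=G_i\setminus\{1\}$, whose radical is trivial whenever $|G_i|>2$. For the orbit factor, $X_i=K\cdot g_i$ is an orbit of a group of automorphisms acting freely on generators. Its radical stabilizes the set of generators, and in the prime-power or general cyclic case we argue that $yX_i=X_i$ with $X_i$ consisting of generators forces $y=1$ after restricting to generators. The radical of a product set is the product of the radicals, so $\rad(X)=\{1\}$ and (a) fails.

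The small factors $G_i$ of order $2$ are the care point here, where $G_i\setminus\{1\}=\{g_i\}$ has trivial radical anyway. More delicate is a cyclic group with a trivial partition, for instance $C_4$ with classes $\{1\}$ and $\{g,g^2,g^3\}$. There the radical is trivial as a set-stabilizer, since $g^2\cdot\{g,g^2,g^3\}\ni 1$, so this case is fine.

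Next, suppose (a) fails, meaning some highest class $X$ has $\rad(X)=\{1\}$. We then invoke the main theorem of~\cite{EvdP2003} (the refinement of Leung--Man~\cite{LeuM1996}). It says that a Schur ring over a cyclic group with a highest basic set of trivial radical is a tensor product $\cA=\cA_1\otimes\cdots\otimes\cA_k$ over a decomposition $G=G_1\times\cdots\times G_k$ into pairwise coprime factors, where all $\cA_i$ are trivial except at most one, which is cyclotomic. Translating each $\cA_i$ into its partition $\Pi_i$ gives (b1) and (b2). The main obstacle is making sure the cited statement matches exactly, in particular that "at most one factor is orbit" rather than several. If needed, we would merge the cyclotomic factors, since a tensor product of cyclotomic rings over coprime groups is cyclotomic over their product with $K=K_1\times\cdots\times K_k$.
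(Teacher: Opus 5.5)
Your derivation of the disjunction ``(a) or (b)'' is essentially the paper's argument. The paper cites \cite[Corollary~6.4]{EvdP2003}: either the radical of $\cA$ is nontrivial, or $\cA$ is a tensor product of a normal Schur ring with trivial radical and Schur rings of rank~$2$. It adds \cite[Theorem~6.1]{EvdP2003}, which makes the normal factor an orbit partition, and uses that all highest classes have the same radical.

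The genuine gap is in your proof that the alternatives are mutually exclusive. You claim that $yX_i=X_i$ for a highest orbit class $X_i=Kg_i$ forces $y=1$, and this is false. Take $G=C_4=\langle g\rangle$ and $K=\aut(G)$. The class $\{g,g^3\}$ is highest, yet $g^2\{g,g^3\}=\{g,g^3\}$, so its radical is $\{1,g^2\}$. Similarly, for $G=C_{p^2}$ and $K=\aut(G)$, the set of generators is a union of cosets of the subgroup of order~$p$.

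This cannot be repaired within your argument. With $k=1$ and $\Pi_1=\Pi=\orb(\aut(C_4),C_4)$, both (a) and (b) hold, so ``exactly one'' is false for (b) as literally formulated. What makes the alternatives disjoint in \cite{EvdP2003} is that the normal factor has \emph{trivial radical}. Recording only ``orbit'' in (b1) loses this condition; the paper's one-line claim that the second possibility is \emph{equivalent} to (b) has the same slip.

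There are two ways to fix it:
\begin{enumerate}
\item[(i)] Add to (b) the requirement that the highest classes of the orbit factor have trivial radical. Then your product formula $\rad(X_1\times\cdots\times X_k)=\rad(X_1)\times\cdots\times\rad(X_k)$, together with your correct computation for the trivial factors, does yield exclusivity.
\item[(ii)] Drop the exclusivity claim altogether. Only ``at least one of (a), (b) holds'' is used later, in the proof of Lemma~\ref{120925a}.
\end{enumerate}
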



\prf
The statement is essentially a reformulation of the known classification of Schur rings over cyclic groups (see \cite{LeuM1996, EvdP2003}). Denote by $\cA$ the Schur ring associated with the partition~$\Pi$. By \cite[Corollary~6.4]{EvdP2003},  either the radical of $\cA$ is nontrivial, or~$\cA$ is a tensor product of a normal Schur ring with trivial radical and Schur rings of rank~$2$. The first possibility is equivalent to statement~(a): the radical of~$\cA$ is just the $\rad(X)$ for any highest class of~$\Pi$. The second possibility is equivalent to statement~(b): the decomposition of~$\cA$ in the tensor product means the existence of the direct product decomposition~$G=G_1\times\cdots\times G_k$ satisfying condition~(b2), the Schur partition associated with  normal Schur ring is always orbit  \cite[Theorem~6.1]{EvdP2003}, and the Schur rings of rank~$2$ are associated with trivial Schur partitions.
\eprf

\section{Arithmetic properties of finite fields of prime order}\label{100925a}
In this section, we establish two number-theoretic facts about finite fields of prime order. These auxiliary results will be used in the classification arguments of Section~5, especially in the case of circulant ternary coherent configurations invariant with respect to one-dimensional affine linear group of a prime field. 

\lmml{300825c}
Let $p>2$ be a prime. Then there exists a primitive root $x\in \mF_p$ such that $1-x$ is also a primitive root.
\elmm
\prf
It was proved in~\cite{CohenST2015} that for any prime power $q > 61$ and for arbitrary nonzero $a, b, c\in \mF_q$, there is always one representation of the form $a = bx+cy$, where $x$ and $y$ are primitive roots of $\mF_q$. Taking $q=p$ and $a=b=c=1$, we get the required result for all $p>61$. For the remaining  primes, the statement can be verified by a straightforward computation.
\eprf

Let $p$ be a prime and $\mF=\mF_p$. Let $X\subset \mF^\times$  be a nonempty subset not containing the element $1=1_\mF$. We say that $X$ is of \emph{group type} if the union  $X\cup\{1\}$ is a multiplicative subgroup of~$\mF$. Clearly, $X$ is of group type only if $\sum_{x\in X}x=-1$.

\lmml{300825b}
In the above notation, the following statements hold:
\nmrt
\tm{1} if $\rad(X)\ne\{1\}$, then $\rad(1-X)=\{1\}$,
\tm{2} if $X$ is of group type, then $1-X$ is of group  type only if $X=\mF^\times\setminus\{1\}$.
\enmrt
\elmm
\prf
Obviously,
\qtnl{1052c}
\sum_{x\in X}x+\sum_{y\in 1-X}y=|X|\cdot 1. 
\eqtn
Also, if $\rad(X)\ne 1$, then $X$ is a union of full cosets of $\rad(X)$ in $\mF^{\times}$. As the sum of elements in each coset is equal to $0$, the sum of elements in $X$ is $0$. 

Now if $\rad(1-X)\ne 1$, then by equality \eqref{1052c}, we have $|X|\cdot 1=0$ in $\mF$. This is impossible since $1\le |X|\le p-1$.  This proves statement (1). 

If both $X$ and $1-X$ are of group type, then by \eqref{1052c} we have $-2=|X|\cdot 1$ in $\mF$.  This yields that  $|X|+2=p$. Thus, $|X|=p-2$, which proves statement (2). 
\eprf

\section{Ternary coherent configurations invariant with respect to AGL$_1(p)$}\label{120925h}
The aim of this section is to prove Theorem~\ref{310825g} below. It represents a special case of Theorem~\ref{310825a} and, in a sense, forms a key part of the entire argument.

\thrml{310825g}
Let $\fX$ be a ternary coherent configuration on  a  field~$\mF=\mF_p$ of prime order $\mmod{p}{\pm 3}{8}$. Assume that $\fX$ is invariant with respect to the  group~$\AGL_1(\mF)$. Then 
$$
\fX=\orb_3(G),
$$ 
where $G=\sym(\mF)$ or~$\AGL_1(\mF)$.
\ethrm

The proof of Theorem~\ref{310825g} is given at the end of the section. In what follows, we fix  a ternary coherent configuration $\fX$ invariant with respect to the group~$\AGL_1(\mF)$.  Denote by~$\fX_{(0,1)}$ the residue of $\fX$ with respect to the pair $(0,1)\in\mF\times\mF$. 
It is easily seen that $\fX_{(0,1)}$ is a partition of~$\mF$ that contains two singleton classes, $\{0\}$ and~$\{1\}$.  We focus on the induced  partition
\qtnl{130925a}
\Pi =\{X\in \fX_{(0,1)}:\ X\ne \{0\}\}
\eqtn
of the multiplicative group~$\mF^\times$. Clearly, $\{1\}$ is a class of~$\Pi$. 

\lmml{020925a}
The partition $\Pi$ is a Schur partition of the group~$\mF^\times$. Moreover, if $X\in\Pi$ and $1\notin X$, then $1-X\in\Pi$.
\elmm
\prf
The residue $\fX_0=\fX_{(0)}$ is a binary coherent configuration. By the assumption, it is invariant with respect to the stabilizer of~$0$ in the group~$\AGL_1(\mF)$. Moreover, $\fX_0$ has two fibers: $\{0\}$ and $\mF^\times$. Therefore, the restriction of $\fX_0$ to the fiber~$\mF^\times$ is invariant with respect to  right multiplications of the group~$\mF^\times=\GL_1(\mF)$. By Proposition~\ref{190925s}, it follows that the restriction of the residue
$$
\fX_{(0,1)}=(\fX_{(0)})_{(1)}
$$
to $\mF^\times$ is a Schur partition of~$\mF^\times$.  Thus $\Pi$ is indeed a Schur partition.

To prove the second claim, take $X\in\Pi$. Then there exists a class $\sX\in\fX$ such that  its residue $\sX_{(0,1)}$ with respect to~$(0,1)$ is equal to~$X$. Let $\sigma=(1,\,2)$ be the transposition of~$\sym(3)$.  Then $\sX^\sigma$ is also a class of~$\fX$, and it contains all triples~$(1,0,x)$ with $x\in X$. 

Now consider the map $f:x\mapsto 1-x$ on $\mF$. Since $f\in\AGL_1(\mF)$ and  $\fX$ is invariant with respect to~$\AGL_1(\mF)$, we have 
$$
(\sX^\sigma)^f=\sX^\sigma.
$$ 
Therefore, $\sX^\sigma$ contains all triples 
$$
(0,1,1-x)=(1,0,x)^f, \qquad x\in X.
$$ 
This shows that $1-X\subseteq(\sX^\sigma)_{(0,1)}$. If the inclusion were strict, then by the same argument, we could enlarge~$X$ within~$\Pi$, contradicting the definition of~$\Pi$.  
Hence, $1-X=(\sX^\sigma)_{(0,1)}\in\Pi$.
\eprf

By Lemma~\ref{020925a}, $\Pi$ is a Schur partition over the cyclic group $\mF^\times$, that is invariant with respect to the permutation $\tau\in\sym(\Pi)$  taking each class $X\ne\{1\}$ to  $1-X$ and leaving the class $\{1\}$ fixed.  

\lmml{120925a}
The Schur partition $\Pi$  is discrete or trivial. 
\elmm
\prf 
By Lemma~\ref{030825c}, the partition $\Pi$ satisfies one of the alternatives (a) or (b) from that lemma. First, assume that $\rad(X)\ne\{1\}$ for each highest class~$X\in \Pi$.  

By Lemma~\ref{300825c}, there exist generators $x,y$ of the group~$\mF^\times$ such that $y=1-x$. Let $X,Y\in\Pi$ be the highest classes of~$\Pi$ that contain $x$ and $y$, respectively. Then 
$$
\rad(X)\ne\{1\}\ne\rad(Y).
$$ 
On the other hand, $1-X=X^\tau=Y$, which contradicts Lemma~\ref{300825b}(1). Thus case~(a) in Lemma~\ref{030825c} is impossible.

To deal with case~(b), we need the following claim in which (as well as in the proof of Lemma~\ref{020925a}), we do not use the assumption on the prime~$p$ in Theorem~\ref{310825g}.

\clm
The partition $\Pi$ is discrete if it has a singleton class different from $\{1\}$. 
\eclm
\prf
Denote by $H$ the union of all singleton classes of $\Pi$. Since the product of any two singleton classes of a Schur partition is also a class of this partition, the set~$H$ is a subgroup of the group $\mF^\times$. Hence the set $X=H\setminus\{1\}$ is of group type. On the other hand, the permutation $\tau$ permutes the singleton classes and fix the class~$\{1\}$. Therefore, $X^\tau=X$. By Lemma~\ref{300825b}(2), this implies that $X=\mF^\times\setminus\{1\}$. Consequently,  the partition~$\Pi$ is discrete.
\eprf

Now let $\mF^\times=G_1\times\cdots\times G_k$ $(k\ge1)$ and $\Pi_i$  the Schur partition of the group~$G_i$, $1\le i\le k$, such that conditions (b1) and (b2) of Lemma~\ref{030825c} are satisfied. Since the group $\mF^\times$ is cyclic, each $G_i$ is a product of some Sylow subgroups of $\mF^\times$. Denote by~$i$ the index for which the group $G_i$ is of even order. Then $G_i$ contains the Sylow $2$-subgroup and hence $-1\in G_i$. If the partition $\Pi_i$ is an orbit one,  then obviously the singleton $\{-1\}$ is a class of~$\Pi_i$   and hence is a class of~$\Pi$. But then the partition~$\Pi$ is discrete by the Claim, and we are done.

Now let the partition $\Pi_i$ be trivial, i.e., consist of exactly two classes, $\{1\}$ and~$X$, the union of which is~$G_i$. Put $Y=1-X=X^\tau$. Since $-1\in X$, we have $2=1-(-1)\in Y$. Furthermore,
\qtnl{210226b}
 Y=Y_1\times\cdots \times Y_k
\eqtn
for some classes $Y_1\in\Pi_1,\ldots,Y_k\in\Pi_k$. At this point, we make use of the assumption on the prime~$p$ to prove that for the chosen~$i$,
\qtnl{210226a}
Y_i=X.
\eqtn
Indeed, denote by $P$ the Sylow $2$-subgroup of the (cyclic) group $\mF^\times$. By the choice of~$i$, we have $P\le G_i$. Furthermore, the element $2$ can uniquely be written as  the product of an element $y\in P$ and an element  of the complement to~$P$ in~$\mF^\times$. On the other hand, it is well known that $\mmod{p}{\pm 3}{8}$ if and only if the element~$2$ is a quadratic non-residue modulo~$p$. Then $2$ is not a square in~$\mF$. Consequently,  $y$ is a generator of~$P$. Now since $2\in Y$, 
formula~\eqref{210226b} implies that $y\in Y_i$. Thus, $Y_i\ne\{1\}$ and equality~\eqref{210226a} follows.

By formulas \eqref{210226b} and \eqref{210226a}, we immediately obtain
$$
|X|=|1-X|=|Y|=|Y_i|\cdot\prod_{j\ne i}|Y_j|=|X|\cdot\prod_{j\ne i}|Y_j|.
$$
Consequently, $|Y_j|=1$ for all $j\ne i$. Now if $Y_j=\{1\}$ for all these~$j$, then $X=Y=1-X$ is of group type, and we conclude by Lemma~\ref{300825b}(2) that $X=\mF^\times\setminus\{1\}$, i.e., the partition~$\Pi$ is trivial. 

To complete the proof, let $j\ne i$ be an index such that $Y_j\ne\{1\}$. Since $|Y_j|=1$, the product $\{1\}\times\ldots\times\{1\}\times Y_j\times \{1\}\times\ldots\times\{1\}$  is a singleton class of~$\Pi$, different from~$\{1\}$. By the Claim, this implies that the partition~$\Pi$ is discrete.
\eprf

{\bf Proof of Theorem~\ref{310825g}.} Let $\fX^*$ be the set of all classes $\sX\in\fX$ such that  $\|\rho(\sX)\|=3$, i.e., the entries of each triple belonging to~$\sX$ are pairwise distinct.  The following statement is straightforward.

\lmml{230226d}
Let $\fX$ be a ternary coherent configuration invariant with respect to a $2$-transitive group~$G$. Then $\fX\setminus\fX^*\subseteq\orb_3(G)$. 
\elmm

The group $G=\AGL_1(\mF)$ is $2$-transitive. Hence by Lemma~\ref{230226d}, it suffices to verify that all classes $\sX\in\fX^*$  are $3$-orbits of either the symmetric group $\sym(\mF)$ or the group~$G$. By Lemma~\ref{120925a}, for each class $\sX\in\fX^*$, exactly one of the following statements holds:
$$
|\sX_{(0,1)}|=p-2 \qquad\text{or}\qquad |\sX_{(0,1)}|=1.
$$

In the former case, $\sX_{(0,1)}=\mF^\times\setminus\{1\}=:X$. Since $\fX$ is invariant with respect to~$G$, this implies that
$$
\sX\supseteq\bigcup_{f\in G}(\{0\}\times\{1\}\times X)^f=\bigcup_{f\in G}\{0^f\}\times\{1^f\}\times X^f=\sX^*,
$$
where $\sX^*$ is the union of all classes in~$\fX^*$. It follows that $|\fX^*|=1$ and hence $\fX=\orb_3(\sym(\mF))$.  

To complete the proof, assume that $|\sX_{(0,1)}|=1$ for all classes $\sX\in\fX^*$. It immediately follows that 
$$
|\sX|=p(p-1)=|G|
$$ 
for all such~$\sX$. On the other hand, since $\fX$ is invariant with respect to the group~$G$,  this equality holds only if $\sX$ is a $3$-orbit of $G$. Thus, in our case, $\fX^*\subseteq\orb_3(G)$, as required.

\section{Proof of Theorems~\ref{060326a} and~\ref{310825a}}\label{180925a}
\subsection{Proof of Theorem~\ref{060326a}} \label{030426a}
First we recall some relevant facts on cyclotomic schemes. Let $\mF$ be a finite field and let $G\le\AGL_1(\mF)$ be a group of  all permutations 
\qtnl{110326b}
x\mapsto ax+b,\ x\in\mF,
\eqtn
where $a$ belongs to some fixed subgroup~$K\le \mF^\times$ and $b\in\mF$. The binary coherent configuration $\orb_2(G)$ is called a \emph{cyclotomic scheme} over~$\mF$, associated with~$K$. 

\lmml{110326a}
Assume that the field $\mF$ is prime and $|\orb_2(G)|>2$. Let $\cX$ be the minimal coherent configuration on $\mF$ such that $\cX\ge\orb_2(G)$ and $\{(0,0)\}\in\cX$.\footnote{In terms of~\cite{CP2019}, $\cX$ is a one-point extension of the scheme~$\orb_2(G)$.} Then the residue $\cX_y$ is discrete for each $1$-tuple~$y\ne(0)$.
\elmm
\prf
By \cite[Theorem~1.2(2)]{EvdP2003}, we have $\cX=\orb_2(K)$. Since the identity is the only permutation of the form~\eqref{110326b} that belongs to~$K$ and leaves the entry $y_1$ fixed, we are done.
\eprf

Now, let  $\fX$ be a circulant ternary coherent configuration of prime order. Its  projection $\pr_2\fX$ is a  coherent configuration of prime order. Moreover, the group $\aut(\pr_2\fX)$ is  transitive. By \cite[Theorem~4.5.1]{CP2019}, any such coherent configuration  is a cyclotomic scheme over prime field. Thus,
\qtnl{080326t}
\pr_2\fX=\orb_2(G),
\eqtn
for some transitive group $G\le\AGL_1(p)$. Moreover, by the theorem hypothesis, we may assume that~$\fX$ is not an association scheme on triples. By Lemma~\ref{070326a}, this implies that  the cyclotomic scheme~$\pr_2\fX$ is nontrivial, i.e., $|\pr_2\fX|>2$. 

Let $\fY$ be the minimal ternary coherent configuration such that $\pr_2 \fY=\pr_2\fX$; in the notation  of \cite{ChenRP2025a}, we have $\fY=\WL_3(\pr_2\fX)$. Then 
\qtnl{110326h}
\fY\le\fX\qaq \fY\le\orb_3(G),
\eqtn
where the second inclusion follows from~\eqref{080326t}. The first inclusion shows also that $\aut(\fY)\ge\aut(\fX)$, see~\eqref{110326y}.

\lmml{110326c}
For any $2$-tuple $x$ of distinct points, the residue $\fY_x$ is discrete.
\elmm
\prf
The group $\aut(\fY)\ge\aut(\fX)$ is transitive. Hence without loss of generality, we may assume that $x_1=0$. Now we make use of the fact proved in~\cite[Lemma~3.1]{ChenRP2025a}, namely
$$
\fY_{(0)}\ge (\pr_2\fY)_0,
$$
where the left-hand side is the residue of $\fY$ with respect to the $1$-tuple $(0)$ and the right-hand side is the minimal coherent configuration $\cX$ such that $\cX\ge \pr_2\fY$ and $\{(0,0)\}\in\cX$. Since the coherent configuration $\pr_2\fY=\pr_2\fX=\orb_2(G)$ is nontrivial, Lemma~\ref{110326a} implies that the residue $\cX_y$ with respect to the $1$-tuple $y=(x_2)$ is discrete. Thus the residue
$$
\fY_x=(\fY_{(0)})_{(x_2)}\ge ((\pr_2\fY)_0)_y=\cX_y
$$
is also discrete, as required.
\eprf

The conclusion of Lemma~\ref{110326c} implies that if $\fZ\ge\fY$ is a ternary coherent configuration such that $\pr_2\fZ=\pr_2\fY$, then $\fZ=\fY$. In view of~\eqref{110326h}, we can  apply this observation to both $\fZ=\fX$ and $\fZ=\orb_3(G)$ to obtain
$$
\fX=\fY=\orb_3(G),
$$ 
which completes the proof.

\subsection{Proof of Theorem~\ref{310825a}}\label{030426b}
Let $\fX$ be a ternary coherent configuration invariant with respect to a regular cyclic group of prime order~$p$, and let $G=\aut(\fX)$.  According to the classification of permutation groups containing a regular cyclic subgroup \cite[Theorem~3]{Jones2002},  the group~$G$ falls into one of the following cases:
\nmrt
\tm{1} $C_p\le  G\le  \AGL_1(p)$, 
\tm{2} $G=\sym(p)$ or $G=\alt(p)$, where $p\ge 3$,
\tm{3} $\PGL_d(q)\le G\le\PGaL_d(q)$, where $p=(q^d-1)/(q-1)$ for some $d\ge 2$,
\tm{4} $G=\PSL_2(11)$, $M_{11}$, or $M_{23}$, where $p=11,11$, or~$23$, respectively.
\enmrt

Assume that $G$ is $2$-transitive. In case~(1), this implies that $G=\AGL_1(p)$, and the required statement follows from Theorem~\ref{310825g}.  Suppose that $G$ falls into cases~(2), (3), or~(4).  Let $\fX^*$ be the set of classes of~$\fX$, defined in the proof of Theorem~\ref{310825g}, and let $\orb_3(G)^*$ be defined in the same way for the ternary coherent configuration $\orb_3(G)$. If now $|\orb_3(G)^*|=1$, i.e., the group $G$ is $3$-transitive, then the invariance of $\fX$ with respect to~$G$ implies that $\fX=\orb_3(\sym(p))$. This rules out case~(2), case~(3) with $d=2$, and the groups $M_{11},M_{23}$ in case~(4) (see~\cite[Table~7.4]{Cam1999}).  In the remaining cases, we have
\qtnl{230226l}
|\orb_3(G)^*|=2.
\eqtn
In case (4) and $G=\PSL_2(11)$, this is verified by direct calculation. In case~(3) and $d\ge 3$, the 
group $\PGL_d(q)$ acts $3$-transitively on each projective line (see~\cite[Table~7.4]{Cam1999}) and transitively on the triples of noncollinear projective points. It follows that the  action of $G$ on the projective space PG$(d-1,q)$ yields exactly two orbits on triples with pairwise distinct entries: the triples of points lying on one projective line, and the triples not lying on the same line.  

It remains to verify that formula~\eqref{230226l} implies $\fX=\orb_3(G)$.  Indeed from~\eqref{230226l}, we obtain   
$$
2=|\orb_3(G)^*|\;\ge\;|\fX^*|\;\ge \;|\orb_3(\sym(p))^*|=1.
$$
Consequently, either $|\fX^*|=1$ or $|\fX^*|=2$. In the former case, we obviously have $\fX=\orb_3(\sym(p))$.  In the last case, $\fX=\orb_3(G)$ by Lemma~\ref{230226d}.

\bibliographystyle{amsplain}

\providecommand{\href}[2]{#2}

\end{document}